\theoremstyle{plain}
\newtheorem{theorem}{Theorem}
\newtheorem{proposition}{Proposition}
\newtheorem{definition}{Definition}
\newcommand{\mb}{\mathbb}
\newcommand{\mc}{\mathcal}
\newcommand{\eul}{\mathfrak}
\newcommand{\bou}{_{\scriptscriptstyle{\rm b}}}
\newcommand{\A}{\eul A}
\newcommand{\Ao}{{\eul A}_{\scriptscriptstyle 0}}
\newcommand{\vp}{\varphi}
\newcommand{\Hil}{{\mc H}}
\newcommand{\C}{\mathrm{C}^{\ast}}
\newcommand{\mult}{\,{\scriptstyle \square}\,}
\newcommand{\D}{{\mc D}}
\def\x{\relax\ifmmode {\mbox{*}}\else*\fi}
\newcommand{\id}{\mathbbm{1}}
\newcommand{\ip}[2]{\langle{#1}|{#2}\rangle}
\newcommand{\ad}{^{\mbox{\scriptsize $\dag$}}}
\newcommand{\LDH}{{\mathcal L}\ad(\D,\Hil)}
\newcommand{\LDHpi}{{\mathcal L}\ad(\D_{\scriptscriptstyle\pi},\Hil_{\scriptscriptstyle\pi})}
\newcommand{\LDHpicl}{{\mathcal L}\ad(\widetilde{\D}_{\scriptscriptstyle\pi},\Hil_{\scriptscriptstyle\pi})}
\newcommand{\QA}{{\mathcal Q}_{\Ao}(\A)}
\newcommand{\SSA}{{\mathcal S}_{\Ao}(\A)}
\newcommand{\rep}{{\mc R}(\A,\Ao)}
\newcommand{\crep}{{\mc R}_c(\A,\Ao)}
\def\H{{\mathcal H}}
\newcommand{\wmult}{\mbox{\raisebox{1pt}{$\scriptscriptstyle{
\square}$}}}
\numberwithin{equation}{section}
\numberwithin{equation}{section}
\begin{document}
\hphantom{.}\vskip 1.8cm

\title[Representable functionals and derivations]{The interplay between representable functionals and derivations on Banach quasi *-algebras}
\author{Maria Stella Adamo}
\address{Dipartimento di Matematica e Informatica, Universit\`a di Palermo, I-90123 Palermo, Italy}
\email{mariastella.adamo@community.unipa.it; msadamo@unict.it}
\keywords{Representable functionals, weak *-derivations, infinitesimal generators of weak  Banach quasi *-algebras}
\subjclass[2010]{Primary 46L08, 46L57; Secondary 46L89, 47L60}

\begin{abstract}
This note aims to highlight the link between representable functionals and derivations on a Banach quasi *-algebra, i.e. a mathematical structure that can be seen as the completion of a normed *-algebra in the case the multiplication is only separately continuous. Representable functionals and derivations have been investigated in previous papers for their importance concerning the study of the structure properties of a Banach quasi *-algebra and applications to quantum models.
\end{abstract}

\maketitle

\section{Introduction and preliminaries}

Representable functionals constitute an important tool to investigate (locally convex) quasi *-algebras for being those linear functionals that allow a GNS-like construction (see \cite{AT,Ant1,bit_repr,Frag2,Trap1}). In particular, we are interested in the case of a Banach quasi *-algebra, i.e. a locally convex quasi *-algebra whose topology is generated by a single norm in the case of separately continuous multiplication (see \cite{Ant1,Bag1,Bag2, Bag5}). As a consequence, the multiplication is defined only for certain couples of elements. On the other hand, in the last decades derivations and their properties have been extensively studied for their use in describing physical phenomena (see, for instance, \cite{AT2,Ant2,Bag8,Bag6,Brat2}). 

In the classical context, a derivation $\delta$ is a linear mapping defined on a dense *-subalgebra $\mathcal{D}(\delta)$ of a normed *-algebra $\Ao[\|\cdot\|_{\scriptscriptstyle0}]$ for which the Leibnitz rule
$$\delta(xy)=\delta(x)y+x\delta(y),\quad\forall x,y\in\mathcal{D}(\delta)$$
holds. In the framework of a Banach quasi *-algebra, the main issue concerns the definition of a weaker form of the Leibnitz rule, suitable for the new situation. This question has been addressed to the paper \cite{AT2}, where weak *-derivations appear in the *-semisimple case. In the latter paper, conditions for a weak *-derivation to be the generator of a *-automorphisms group are given, in particular a uniformly bounded norm continuous group has a closed generator. In the case $\delta$ is a special kind of weak *-derivation, we are able to give a condition of closability depending on a certain representable functional.

Going in details, we remind some definitions about Banach quasi *-algebras and representations before focusing our attention on the properties of representable functionals, in particular we are interested in notions like fully representability and *-semisimplicity, important for the ongoing work (Section 2). Sufficiently many sesquilinear forms are needed to define weak *-derivations and study them when they are given as the infinitesimal generator of a weak *-automorphisms group. For a general weak *-derivation, representable functionals still play an important role in their study (Section 3).

We start giving some preliminary notions. For further details we refer to \cite{Ant1}.
\medskip

\begin{definition} A {\em quasi *--algebra} $(\A, \Ao)$ is a pair consisting of a vector space $\A$ and a *--algebra $\Ao$ contained in $\A$ as a subspace and such that
\begin{itemize}
\item[(i)] $\A$ carries an involution $a\mapsto a^*$ extending the involution of $\Ao$;
\item[(ii)] $\A$ is  a bimodule over $\A_0$ and the module multiplications extend the multiplication of $\Ao$. In particular, the following associative laws hold:
\begin{equation}\notag \label{eq_associativity}
(xa)y = x(ay); \ \ a(xy)= (ax)y, \; \forall \, a \in \A, \,  x,y \in \Ao;
\end{equation}
\item[(iii)] $(ax)^*=x^*a^*$, for every $a \in \A$ and $x \in \Ao$.
\end{itemize}
\end{definition}

A quasi *-algebra $(\A, \Ao)$ is \emph{unital} if there is an element $\mathbbm{1}\in \Ao$, such that $a\mathbbm{1}=a=\mathbbm{1} a$, for all $a \in \A$; $\mathbbm{1}$ is unique and called the \emph{unit} of $(\A, \Ao)$.
\medskip

We now introduce a suitable class of operators in order to represent abstract quasi *-algebras. These operators are defined on a common dense domain $\D$ with values on a Hilbert space $\Hil$ and admit an adjoint with the same property. Hence, these operators are \textit{closable}. 
\smallskip

Let $\Hil$ be a Hilbert space with inner product $\ip{\cdot}{\cdot}$ and let $\D$ be a dense linear subspace of $\Hil$. We denote by $\LDH$ the set of all closable operators $X$ in $\Hil$ such that the domain of $X$ is $\D$ and the domain of the adjoint $X^{\ast}$ contains $\D$, i.e.
$$\LDH=\left\{X:\D\to\Hil:\mathcal{D}(X)=\D, \mathcal{D}(X^{\ast})\supset\D\right\}.$$
$\LDH$ is a $\mathbb{C}-$vector space with the usual sum $X+Y$ and scalar multiplication $\lambda X$ for every $X,Y\in\LDH$ and $\lambda\in\mathbb{C}$. If we define the following involution $\dagger$ and partial multiplication $\wmult$
$$X\mapsto X^{\dagger}\equiv X^{\ast}\upharpoonright_{\D}\quad\text{and}\quad X\wmult Y=X^{\dagger\ast}Y$$
defined whenever $X$ is a left multiplier of $Y$ (or $Y$ a right multiplier of $X$), i.e. $Y\D\subset\mathcal{D}(X^{\dagger\ast})$ and $X^{\dagger}\D\subset\mathcal{D}(Y^{\ast})$, then $\LDH$ is a partial *-algebra defined in \cite{Ant1}.

In $\LDH$ several topologies can be introduced (see, \cite{Ant1}). In particular, the {\em weak topology} ${\sf t_w}$ defined by the family of seminorms
$$ p_{\xi,\eta} (X)=\ip{X\xi}{\eta},\quad \xi, \eta \in \D$$
is related to a characterization of continuity of representable functionals given in a recent work \cite{AT}.
\smallskip

Due to the structure properties of quasi *-algebras, we define a *-representation similarly to the classical case, except for the requirement on the multiplications, in the following way.

\begin{definition}
A {\em *-representation} of a quasi *-algebra $(\A,\Ao)$ is a *-homomorphism $\pi:\A\to\LDHpi$, where $\D_{\scriptscriptstyle\pi}$ is a dense subspace of the Hilbert space $\Hil_{\scriptscriptstyle\pi}$, with the following properties:
\begin{itemize}
\item[(i)] $\pi(a^{\ast})=\pi(a)^{\dagger}$ for all $a\in\A$;
\item[(ii)] if $a\in\A$ and $x\in\Ao$, then $\pi(a)$ is a left multiplier of $\pi(x)$ and $\pi(a)\wmult\pi(x)=\pi(ax)$.
\end{itemize}
\end{definition}

A *-representation $\pi$ is 
\begin{itemize}
\item {\em faithful} if $a\neq0$ implies $\pi(a)\neq0$; 
\item {\em cyclic} if $\pi(\Ao)\xi$ is dense in $\Hil_{\scriptscriptstyle\pi}$ for some $\xi\in\D_{\scriptscriptstyle\pi}$.
\end{itemize}
If $(\A,\Ao)$ has a unit $\id$, then we suppose that $\pi(\id)=I_\D$, the identity operator of $\D$.
\smallskip

The {\em closure $\widetilde{\pi}$} of a *-representation $\pi$ of the quasi *-algebra $(\A,\Ao)$ in $\LDHpi$ is defined as
$$\widetilde{\pi}:\A\to\LDHpicl,\quad a\mapsto\overline{\pi(a)}_{\upharpoonright\widetilde{\D}_{\scriptscriptstyle\pi}}$$
where $\widetilde{\D}_{\scriptscriptstyle\pi}$ is the completion of $\D_{\scriptscriptstyle\pi}$ with respect to the graph topology, i.e. the topology defined by the seminorms $\eta\in\D_{\scriptscriptstyle\pi}\mapsto\left\|\pi(a)\eta\right\|$ for every $a\in\A$.
{A *-representation $\pi$ is said to be {\em closed} if $\pi=\widetilde{\pi}$.}
\medskip

A  quasi *-algebra $(\A,\Ao)$ is called a {\em normed quasi *-algebra} if a norm
$\|\cdot\|$ is defined on $\A$ with the properties
\begin{itemize}
\item[(i)]$\|a^*\|=\|a\|, \quad \forall a \in \A$;
\item[(ii)] $\Ao$ is dense in $\A$;
\item[(iii)]for every $x \in \Ao$, the map $R_x: a \in \A \to ax \in \A$ is continuous in
$\A$.
\end{itemize}
{The continuity of the involution implies that
\begin{itemize}
\item[(iii')]for every $x \in \Ao$, the map $L_x: a \in \A \to xa \in \A$ is continuous in
$\A$.
\end{itemize}
}

\begin{definition}
If $(\A,\| \cdot \|) $ is a Banach space, we say that $(\A,\Ao)$ is a Banach quasi *-algebra.\label{def} 
\end{definition}
The norm topology of $\A$ will be denoted by $\tau_n$. 

\section{Representability of Banach quasi *-algebras}
In this section we examine some properties related to representability of Banach quasi *-algebras. Among them, an important role is played by \textit{fully representability} and \textit{*-semisimplicity}. For details, see \cite{AT,Ant1,Bag5, Frag2,Trap1}.

\begin{theorem}\cite[Theorem 3.5]{Trap1}\label{thm_repr}
Let $(\A, \Ao)$ be a quasi *-algebra with unit
$\id$ and let $\omega$ be a linear functional on $(\A, \Ao)$ that satisfies the following conditions:
\begin{itemize}
    \item[(L.1)]$\omega(x^*x) \geq 0, \quad \forall x \in \Ao$;
    \item[(L.2)]$\omega(y^*a^* x)= \overline{\omega(x^*ay)}, \quad\forall x,y \in \Ao, \forall a \in \A$;
    \item[(L.3)]$\forall a \in \A$, there exists $\gamma_a >0$ such that $$|\omega(a^*x)| \leq \gamma_a \omega(x^*x)^{1/2}, \quad \forall x\in \Ao.$$\end{itemize}
Then, there exists a triple $(\pi_\omega,\lambda_\omega,\Hil_\omega)$ such that:
\begin{itemize}
\item $\pi_\omega$ is a closed cyclic *-representation ${\pi}_\omega$ of $(\A,\Ao)$, with cyclic vector $\xi_\omega$;
\item $\lambda_\omega$ is a linear map of $\A$ into $\lambda_\omega(\Ao)=\D_{\pi_\omega}$, $\xi_\omega=\lambda_\omega(\id)$ and $\pi_\omega(a)\lambda_\omega(x)=\lambda_\omega(ax)$, for every $a\in\A$ and $x\in\Ao$;
\item $ \omega(a)=\ip{{\pi}_\omega(a)\xi_\omega}{\xi_\omega}$, for every $a\in \A.$ 
\end{itemize}
This representation is unique up to unitary equivalence.
\end{theorem}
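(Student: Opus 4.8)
The plan is to run a GNS-type construction in which condition (L.3) is exactly what lets the data, built a priori only on $\Ao$, be transported to all of $\A$. First I would endow $\Ao$ with the sesquilinear form $\ip{x}{y}_\omega:=\omega(y^*x)$: condition (L.1) makes it positive semidefinite, while (L.2) specialized to $a=\id$ gives $\omega(y^*x)=\overline{\omega(x^*y)}$, i.e.\ Hermiticity. Writing $N_\omega:=\{x\in\Ao:\omega(x^*x)=0\}$, the Cauchy--Schwarz inequality for this form shows $N_\omega$ is a subspace, so the form descends to a genuine inner product on $\Ao/N_\omega$. I let $\Hil_\omega$ be the completion, set $\lambda_\omega(x):=x+N_\omega$, and record that $\D_{\pi_\omega}:=\lambda_\omega(\Ao)$ is dense in $\Hil_\omega$.

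The properly quasi-*-algebraic step is extending $\lambda_\omega$ from $\Ao$ to $\A$. Given $a\in\A$, I would consider the functional $\lambda_\omega(y)\mapsto\overline{\omega(y^*a)}$ on $\D_{\pi_\omega}$; rewriting $\omega(y^*a)=\overline{\omega(a^*y)}$ via (L.2) and then bounding $|\omega(a^*y)|\le\gamma_a\,\omega(y^*y)^{1/2}=\gamma_a\|\lambda_\omega(y)\|$ via (L.3) shows it is bounded. The Riesz lemma then produces a unique vector $\lambda_\omega(a)\in\Hil_\omega$ with $\ip{\lambda_\omega(a)}{\lambda_\omega(y)}_\omega=\omega(y^*a)$ for all $y\in\Ao$, agreeing with the previous definition when $a\in\Ao$. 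I then define the candidate representation by $\pi_\omega(a)\lambda_\omega(x):=\lambda_\omega(ax)$ for $x\in\Ao$, noting $ax\in\A$, so that $\pi_\omega(a)$ is naturally valued in $\LDHpi$.

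I expect the main work to be in verifying that $\pi_\omega$ is a well-defined *-representation. \emph{Well-definedness} reduces to showing $z\in N_\omega\Rightarrow\lambda_\omega(az)=0$; I would get this by applying (L.3) to the element $a^*y\in\A$, giving $|\omega(y^*az)|=|\omega((a^*y)^*z)|\le\gamma_{a^*y}\,\omega(z^*z)^{1/2}=0$ for every $y\in\Ao$, whence $\lambda_\omega(az)=0$ by density. The \emph{adjoint relation} $\pi_\omega(a)^\dagger=\pi_\omega(a^*)$ --- which simultaneously shows $\pi_\omega(a)\in\LDHpi$, hence closability, and axiom (i) of a *-representation --- follows from $\ip{\pi_\omega(a)\lambda_\omega(x)}{\lambda_\omega(y)}_\omega=\omega(y^*ax)=\overline{\omega(x^*a^*y)}=\ip{\lambda_\omega(x)}{\pi_\omega(a^*)\lambda_\omega(y)}_\omega$, where the middle identity is precisely (L.2). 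For the \emph{multiplier} axiom (ii), I note that for $x\in\Ao$ the operator $\pi_\omega(x)$ maps $\D_{\pi_\omega}$ into itself (since $xz\in\Ao$), so the partial product $\pi_\omega(a)\wmult\pi_\omega(x)$ collapses to ordinary composition on $\D_{\pi_\omega}$, and the associative law $a(xz)=(ax)z$ of the quasi *-algebra yields $\pi_\omega(a)\pi_\omega(x)\lambda_\omega(z)=\lambda_\omega((ax)z)=\pi_\omega(ax)\lambda_\omega(z)$.

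It remains to collect the cyclicity and reproducing properties and to pass to the closure. With $\xi_\omega:=\lambda_\omega(\id)$ one has $\pi_\omega(\Ao)\xi_\omega=\{\lambda_\omega(x):x\in\Ao\}=\D_{\pi_\omega}$, so $\pi_\omega$ is cyclic; replacing $\pi_\omega$ by its closure $\widetilde{\pi_\omega}$ makes it closed without disturbing cyclicity. The identity $\omega(a)=\ip{\pi_\omega(a)\xi_\omega}{\xi_\omega}_\omega$ follows from $\pi_\omega(a)\xi_\omega=\lambda_\omega(a)$ and $\ip{\lambda_\omega(a)}{\lambda_\omega(\id)}_\omega=\omega(a)$. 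Finally, for uniqueness up to unitary equivalence I would take a second triple $(\pi',\lambda',\Hil')$ with cyclic vector $\xi'$ reproducing $\omega$ and set $U\lambda_\omega(x):=\lambda'(x)$; since both sides reproduce $\omega$, $U$ preserves the inner product on the dense domain and therefore extends to a unitary intertwining $\pi_\omega$ and $\pi'$.
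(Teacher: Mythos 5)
The paper never proves this theorem: it is imported verbatim from \cite[Theorem 3.5]{Trap1}, so there is no in-paper argument to compare against, only the standard GNS-type construction of that reference --- which is exactly what you reproduce (quotient of $\Ao$ by $N_\omega$, completion, Riesz extension of $\lambda_\omega$ to $\A$ via (L.3), definition of $\pi_\omega(a)\lambda_\omega(x)=\lambda_\omega(ax)$, passage to the closure, and the usual uniqueness argument). Most of your steps are correct and well organized; in particular, applying (L.3) to the element $a^*y\in\A$ to prove well-definedness of $\pi_\omega(a)$ is precisely the right mechanism, as is deriving the adjoint relation $\pi_\omega(a)^\dagger=\pi_\omega(a^*)$ from (L.2).

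There is, however, one genuine gap, in the verification of axiom (ii) of a *-representation. You assert that, since $\pi_\omega(x)$ maps $\D_{\pi_\omega}=\lambda_\omega(\Ao)$ into itself for $x\in\Ao$, the partial product $\pi_\omega(a)\wmult\pi_\omega(x)$ ``collapses to ordinary composition.'' By the definition recalled in Section 1, $X\wmult Y$ is defined only when \emph{both} $Y\D\subset\mathcal{D}(X^{\dagger\ast})$ \emph{and} $X^{\dagger}\D\subset\mathcal{D}(Y^{\ast})$ hold. The first condition is indeed automatic here, because $\pi_\omega(x)\D_{\pi_\omega}\subset\D_{\pi_\omega}\subset\mathcal{D}(\pi_\omega(a^*)^{\ast})$; but the second one, $\pi_\omega(a^*)\D_{\pi_\omega}\subset\mathcal{D}(\pi_\omega(x)^{\ast})$, is not: the vector $\pi_\omega(a^*)\lambda_\omega(z)=\lambda_\omega(a^*z)$ generally lies outside $\D_{\pi_\omega}$, and $\pi_\omega(x)$ need not be bounded (no topology on $(\A,\Ao)$ is assumed in this theorem), so $\mathcal{D}(\pi_\omega(x)^{\ast})$ need not be all of $\Hil_\omega$. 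The missing verification again uses (L.2), (L.3) and the associative laws: for $y,z\in\Ao$,
\begin{equation*}
\ip{\pi_\omega(x)\lambda_\omega(y)}{\lambda_\omega(a^*z)}
=\overline{\omega\bigl((xy)^*a^*z\bigr)}
=\omega\bigl(z^*a(xy)\bigr)
=\omega\bigl((z^*(ax))y\bigr)
=\omega(b^*y),
\end{equation*}
where $b:=x^*a^*z=(z^*(ax))^*\in\A$; then (L.3) applied to $b$ gives $|\omega(b^*y)|\leq\gamma_b\,\omega(y^*y)^{1/2}=\gamma_b\|\lambda_\omega(y)\|$, which is exactly the boundedness needed to conclude $\lambda_\omega(a^*z)\in\mathcal{D}(\pi_\omega(x)^{\ast})$. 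With this supplement --- the same trick you already used for well-definedness, now applied to $x^*a^*z$ instead of $a^*y$ --- the left-multiplier condition holds, the product equals composition on $\D_{\pi_\omega}$, and the rest of your argument (cyclicity, the reproducing identity, closure, uniqueness) goes through as sketched.
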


\begin{definition} A linear functional $\omega:\A\to\mathbb{C}$ satisfying (L.1)-(L.3) in Theorem \ref{thm_repr} is called {\em representable} on the quasi *-algebra $(\A,\Ao)$.
\end{definition}
The family of representable functionals on the quasi *-algebra $(\A,\Ao)$ is denoted by $\rep$.
\medskip

As in \cite[Definition 2.1]{Trap1}, given a quasi *-algebra $(\A,\Ao)$, we denote by $\QA$ the set of all sesquilinear forms on $\A\times\A$ such that
\begin{itemize}
\item[(i)] $\varphi(a,a)\geq0$ for every $a\in\A$.
\item[(ii)] $\varphi(ax,y)=\varphi(x,a^{\ast}y)$ for every $a\in\A$ and $x,y\in\Ao$
\end{itemize}

\begin{proposition}\cite[Proposition 2.9]{AT}\label{thm_GNS} Let $(\A, \Ao)$ be a quasi *-algebra with unit $\id$ and $\omega$ a linear functional on $\A$ satisfying (L.1) and (L.2). The following statements are equivalent.
\begin{itemize}
\item[(i)] $\omega$ is representable.
\item[(ii)] There exist a *-representation $\pi$ defined on a dense domain $\D_\pi$ of a Hilbert space $\H_\pi$ and a vector $\zeta \in \D_\pi$ such that
$$ \omega(a) = \ip{\pi(a)\zeta}{\zeta}, \quad \forall a \in \A.$$
\item[(iii)] There exists a sesquilinear form $\Omega^\omega \in \QA$ such that $$\omega(a)=\Omega^\omega (a,\id), \quad \forall a \in \A.$$
\end{itemize}
\end{proposition}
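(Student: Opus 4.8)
The plan is to prove the cycle $(i)\Rightarrow(ii)\Rightarrow(iii)\Rightarrow(i)$, exploiting that (L.1) and (L.2) are standing hypotheses. The implication $(i)\Rightarrow(ii)$ is essentially free: if $\omega$ is representable, then Theorem \ref{thm_repr} yields a closed cyclic *-representation $\pi_\omega$ on $\D_{\pi_\omega}\subset\Hil_\omega$ with cyclic vector $\xi_\omega\in\D_{\pi_\omega}$ and $\omega(a)=\ip{\pi_\omega(a)\xi_\omega}{\xi_\omega}$. A closed *-representation is in particular a *-representation, so taking $\pi=\pi_\omega$, $\D_\pi=\D_{\pi_\omega}$ and $\zeta=\xi_\omega$ delivers (ii) verbatim.

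For $(ii)\Rightarrow(iii)$ I would define the candidate sesquilinear form directly from the representation,
\[ \omom(a,b):=\ip{\pi(a)\zeta}{\pi(b)\zeta},\qquad a,b\in\A, \]
which is meaningful since $\zeta\in\D_\pi$ and each $\pi(c)$ maps $\D_\pi$ into $\Hil_\pi$. Sesquilinearity is immediate, positivity follows from $\omom(a,a)=\norm{\pi(a)\zeta}^2\ge0$, and $\omom(a,\id)=\ip{\pi(a)\zeta}{\zeta}=\omega(a)$ because $\pi(\id)=I$. The substantive point is condition (ii) defining $\QA$, namely $\omom(ax,y)=\omom(x,a^*y)$ for $a\in\A$ and $x,y\in\Ao$. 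To check it I would use property (ii) of a *-representation to rewrite $\pi(ax)\zeta=(\pi(a)\wmult\pi(x))\zeta=\pi(a)^{\dagger\ast}(\pi(x)\zeta)$, the left-multiplier condition guaranteeing $\pi(x)\zeta\in\mathcal{D}(\pi(a)^{\dagger\ast})$, and then transpose $\pi(a)^{\dagger\ast}$ across the inner product. Since $\pi(a)^\dagger=\pi(a^*)$ and $\pi(a^*)^{\dagger\ast}$ extends $\pi(a^*)$, this converts $\omom(ax,y)$ into $\ip{\pi(x)\zeta}{\pi(a^*y)\zeta}=\omom(x,a^*y)$, as required.

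The transposition of the merely closable operator $\pi(a)^{\dagger\ast}$ is the delicate step, and I expect it to be the main obstacle: the identity $\ip{\pi(a)^{\dagger\ast}u}{v}=\ip{u}{\pi(a)^{\dagger}v}$ is legitimate only for $v\in\mathcal{D}(\pi(a)^{\dagger})=\D_\pi$, so one genuinely needs $\pi(y)\zeta\in\D_\pi$ for $y\in\Ao$. I would secure this by invoking the standard feature of *-representations of quasi *-algebras that $\pi(\Ao)$ leaves the dense domain $\D_\pi$ invariant, i.e. $\pi\!\up_{\Ao}$ takes values in $\mathcal{L}^{\dagger}(\D_\pi)$; with this in hand, every adjoint and closure manipulation takes place between bona fide elements of $\D_\pi$ and is justified.

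Finally, for $(iii)\Rightarrow(i)$ only (L.3) remains, the other two conditions being assumed. Applying condition (ii) of $\QA$ with the unit, one obtains the two identities $\omega(x^*x)=\omom(x^*x,\id)=\omom(x,x)$ and $\omega(a^*x)=\omom(a^*x,\id)=\omom(x,a)$ for $x\in\Ao$, $a\in\A$. As $\omom$ is a positive, hence Hermitian, sesquilinear form, the Cauchy--Schwarz inequality gives
\[ |\omega(a^*x)|=|\omom(x,a)|\le \omom(x,x)^{1/2}\,\omom(a,a)^{1/2}=\omega(x^*x)^{1/2}\,\omom(a,a)^{1/2}. \]
Setting $\gamma_a:=\omom(a,a)^{1/2}$ (and any positive constant when this vanishes) is exactly (L.3), so $\omega$ is representable and the cycle closes.
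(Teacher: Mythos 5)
Your cycle structure and two of its three legs are sound: (i)$\Rightarrow$(ii) is indeed immediate from Theorem \ref{thm_repr}, and (iii)$\Rightarrow$(i) is the correct Cauchy--Schwarz argument, giving (L.3) with $\gamma_a=\Omega^\omega(a,a)^{1/2}$. (Note that this paper does not reprove the proposition -- it cites \cite{AT} -- so your argument is judged here on its own terms.) The genuine gap is in (ii)$\Rightarrow$(iii), at exactly the point you flagged as delicate: to transpose $\pi(a)^{\dagger\ast}$ you need $\pi(y)\zeta\in\D_\pi$, and you dispose of this by invoking ``the standard feature of *-representations of quasi *-algebras that $\pi(\Ao)$ leaves $\D_\pi$ invariant.'' No such feature is available in this setting. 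A *-representation takes values in $\LDHpi$, whose elements map $\D_\pi$ into $\Hil_\pi$, not into $\D_\pi$; the multiplier axiom only yields $\pi(y)\D_\pi\subset\mathcal{D}(\pi(a)^{\dagger\ast})$ and $\pi(a)^{\dagger}\D_\pi\subset\mathcal{D}(\pi(y)^{\ast})$, and neither inclusion puts $\pi(y)\zeta$ back inside $\D_\pi$. Domain invariance under $\pi(\Ao)$ is a special property of the GNS representation (there $\D_{\pi_\omega}=\lambda_\omega(\Ao)$ and $\pi_\omega(x)\lambda_\omega(y)=\lambda_\omega(xy)\in\D_{\pi_\omega}$), but statement (ii) posits an \emph{arbitrary} *-representation $\pi$ and vector $\zeta$, for which invariance can fail (already for an ordinary *-algebra: multiplication operators restricted to a dense, non-invariant domain give such a $\pi$). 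So, as written, the implication is not proved.

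The step can be repaired without any invariance assumption by never letting anything but $\zeta$ itself occupy the domain slot of an adjoint identity: one shows that both sides equal $\omega(y^{\ast}ax)$. The multiplier axiom applied to the pair $(x^{\ast}a^{\ast},y)$, together with $\pi(b^{\ast})=\pi(b)^{\dagger}$, gives $\pi(ax)\zeta=\pi(x^{\ast}a^{\ast})^{\dagger}\zeta\in\mathcal{D}(\pi(y)^{\ast})$, whence
\begin{equation*}
\Omega^\omega(ax,y)=\ip{\pi(ax)\zeta}{\pi(y)\zeta}=\ip{\pi(y)^{\ast}\pi(ax)\zeta}{\zeta}=\ip{\pi(y^{\ast}ax)\zeta}{\zeta}=\omega(y^{\ast}ax),
\end{equation*}
using $\pi(y^{\ast}(ax))=\pi(y^{\ast})\wmult\pi(ax)=\pi(y)^{\ast}\pi(ax)$ on $\D_\pi$ (which follows from the axioms by taking $\dagger$-adjoints). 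Symmetrically, the axiom applied to $(y^{\ast}a,x)$ gives $\pi(x)\zeta\in\mathcal{D}(\pi(a^{\ast}y)^{\ast})$ and $\Omega^\omega(x,a^{\ast}y)=\ip{\pi(a^{\ast}y)^{\ast}\pi(x)\zeta}{\zeta}=\omega(y^{\ast}ax)$ as well. Alternatively, you can avoid (ii)$\Rightarrow$(iii) altogether and close the equivalences as (ii)$\Rightarrow$(i) (via $\omega(a^{\ast}x)=\ip{\pi(x)\zeta}{\pi(a)\zeta}$, again with $\zeta$ in the domain slot, then Cauchy--Schwarz) plus (i)$\Rightarrow$(iii) through the GNS representation, where your invariance argument \emph{is} legitimate; this is consistent with the paper, which defines $\Omega^\omega$ in \eqref{eq_omom} through $\pi_\omega$ rather than through an arbitrary representation.
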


To every  $\omega \in \rep$ we can associate two sesquilinear forms. The first one $\Omega^\omega$, already introduced in (iii) of Proposition \ref{thm_GNS}, can be defined through the GNS representation $\pi_\omega$, with cyclic vector $\xi_\omega$. In fact, we put
\begin{equation}\label{eq_omom}\Omega^\omega (a,b)=\ip{\pi_\omega(a)\xi_\omega}{\pi_\omega(b)\xi_\omega}, \quad a,b \in \A.\end{equation}
As we have seen $\Omega^\omega\in \QA$ and
$ \omega(a)= \Omega^\omega (a,\id),$ for every $a \in \A.$

The second sesquilinear form, which we denote by $\vp_\omega$, is defined only on $\Ao\times\Ao$ by
\begin{equation} \label{sesqu_ass} \vp_\omega(x,y)= \omega(y^*x), \quad x,y \in \Ao.\end{equation}
It is clear that $\Omega^\omega$ extends $\vp_\omega.$
It is easy to see that
\begin{itemize}
\item[(i)] $\vp_\omega(x,x) \geq 0,$ for every $x \in \Ao$.
\item[(ii)] $\vp_\omega(xy,z)=\vp_\omega (y,x^*z)$ for every $x, y, z \in \Ao$.
\end{itemize}
\medskip

If $(\A, \Ao)$ is a normed quasi *-algebra, we denote by $\crep$ the subset of $\rep$ consisting of continuous functionals.
As shown in \cite{Frag2}, if $\omega\in \crep$, then the sesquilinear form $\vp_\omega$ defined in \eqref{sesqu_ass}, is {\em closable}; that is, $\vp_\omega(x_n,x_n)\to 0$, for every sequence $\{x_n\}\subset \Ao$ such that $$\mbox{$\|x_n\|\to 0$ and $\vp_\omega(x_n-x_m, x_n-x_m)\to 0$}.$$

In this case, $\vp_\omega$  has a closed extension $\overline{\vp}_\omega$ to a dense domain $D(\overline{\vp}_\omega)$ containing $\Ao$. Thus, a natural question arises: under which conditions one gets the equality $D(\overline{\vp}_\omega)=\A$? An answer to this question will be given in Proposition \ref{prop1}.
\medskip

Consider now the set
$$\A_{\mc R}:= \bigcap_{\omega \in {\mc R}_c(\A,\Ao)}D(\overline{\vp_\omega}).$$

If ${\mc R}_c(\A,\Ao)=\{0\}$, we put $\A_{\mc R}=\A$. Note that, if for every $\omega \in {\mc R}_c(\A,\Ao)$, $\vp_\omega$ is jointly
continuous with respect to the topology $\tau_n$ defined by the norm $\|\cdot\|$, we get
$\A_{\mc R}=\A$.

\medskip Set
$$\Ao^+:=\left\{\sum_{k=1}^n x_k^* x_k, \, x_k \in \Ao,\, n \in {\mb N}\right\}.$$
Then $\Ao^+$ is a wedge in $\Ao$ and we call the elements of $\Ao^+$ \emph{positive elements of} $\Ao$.
As in \cite{Frag2}, we call \emph{positive elements of} $\A$ the members of $\overline{\Ao^+}^{\tau_n}$. We set
$\A^+:=\overline{\Ao^+}^{\tau_n}$.

\begin{definition} A family of positive linear functionals $\mc F$ on
$(\A[\tau_n], \Ao)$ is called {\em sufficient} if for every $a \in
\A^+$, $a \neq 0$, there exists $\omega \in {\mc F}$ such that $\omega
(a)>0$.
\end{definition}

\begin{definition}\cite[Definition 4.1]{Frag2}\label{fully_rep} A normed quasi $^{\ast}$-algebra
$(\A[\tau_n], \Ao)$ is called {\em fully representable} if ${\mc
R}_c(\A,\Ao)$ is sufficient and $\A_{\mc R}=\A$.
\end{definition}
 
If $(\A,\Ao)$ has a unit $\id$, the condition of sufficiency required in Definition \ref{fully_rep} joined with the following condition of positivity 
\begin{equation}\tag{P}\label{P} a\in\A\;\text{and}\;\omega_x(a)\geq0\;\text{for every}\;\omega\in\mathcal{R}_c(\A,\Ao)\;\text{and}\;x\in\Ao\;\;\Rightarrow\;\;a\geq0
\end{equation}
tells us that $\omega(a)=0$ for every $\omega\in\crep$ implies $a=0$.
\medskip

We denote by $\SSA$ the subset of $\QA$ consisting of all continuous sesquilinear forms $\Omega:\A\times\A\to\mathbb{C}$ such that
$$ |\Omega(a,b)|\leq \|a\|\|b\|, \quad \forall a, b\in \A.$$
By defining $$\|\Omega\|=\displaystyle\sup_{\|a\|=\|b\|=1}\left|\Omega(a,b)\right|,$$ one obviously has $\|\Omega\| \leq 1$, for every $\Omega \in \SSA$.
\begin{definition}\label{def1}
A normed quasi *-algebra $(\mathfrak{A}[\tau_n],\Ao)$ is called {\em *-semi\-simple} if, for every $0\neq a\in\A$, there exists $\Omega\in\mathcal{S}_{\Ao}(\A)$ such that $\Omega(a,a)>0$.
\end{definition}

As we mentioned before, if $\omega\in \crep$, then the form $\vp_\omega$ defined in \eqref{sesqu_ass} is closable. For Banach quasi *-algebras this result can be improved.
\begin{proposition}\cite[Proposition 3.6]{AT}\label{prop1}
Let $(\A, \Ao)$ be a Banach quasi *-algebra with unit $\id$,  $\omega \in \crep$ and $\vp_{\omega}$ the associated sesquilinear form on $\Ao \times \Ao$ defined as in \eqref{sesqu_ass}. Then $D(\overline{\vp}_\omega)=\A$;  hence $\overline{\vp}_\omega$ is everywhere defined and bounded.
\end{proposition}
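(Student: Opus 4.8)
The plan is to reduce everything to a single boundedness estimate: if I can show that $\vp_\omega$ is bounded on $\Ao\times\Ao$, say $|\vp_\omega(x,y)|\le M^2\|x\|\,\|y\|$, then the conclusion $D(\overline{\vp}_\omega)=\A$ is immediate. Indeed, given any $a\in\A$ and any sequence $x_n\in\Ao$ with $\|x_n-a\|\to0$ (such a sequence exists since $\Ao$ is dense in the Banach space $\A$), boundedness gives $\vp_\omega(x_n-x_m,x_n-x_m)\le M^2\|x_n-x_m\|^2\to0$, so $\{x_n\}$ is Cauchy for the form and $a\in D(\overline{\vp}_\omega)$. To produce the bound I would apply the closed graph theorem to the linear map $V\colon\A\to\Hil_\omega$, $V(a)=\pi_\omega(a)\xi_\omega$, where $(\pi_\omega,\lambda_\omega,\Hil_\omega)$ is the GNS triple of Theorem \ref{thm_repr}; note $V(a)=\pi_\omega(a)\lambda_\omega(\id)=\lambda_\omega(a)$, so that $\omom(a,a)=\|V(a)\|^2$.

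First I would record the identity
$$\omom(a,x)=\ip{\pi_\omega(a)\xi_\omega}{\pi_\omega(x)\xi_\omega}=\overline{\omega(a^{\ast}x)},\qquad a\in\A,\ x\in\Ao.$$
This follows because $\pi_\omega(x)\xi_\omega=\lambda_\omega(x)\in\D_{\pi_\omega}\subset\mathcal D(\pi_\omega(a)^{\ast})$, and $\pi_\omega(a)^{\ast}\up_{\D_{\pi_\omega}}=\pi_\omega(a^{\ast})$, whence
$\ip{\pi_\omega(a)\xi_\omega}{\lambda_\omega(x)}=\ip{\xi_\omega}{\pi_\omega(a^{\ast})\lambda_\omega(x)}=\ip{\xi_\omega}{\lambda_\omega(a^{\ast}x)}=\overline{\omega(a^{\ast}x)}$, using $\pi_\omega(b)\lambda_\omega(x)=\lambda_\omega(bx)$ and $\omega(c)=\ip{\pi_\omega(c)\xi_\omega}{\xi_\omega}$.

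The heart of the argument, and the step I expect to be the main obstacle, is proving that the graph of $V$ is closed. I would take $a_n\to a$ in $\A$ with $V(a_n)\to\eta$ in $\Hil_\omega$ and test $\eta$ against the dense set $\lambda_\omega(\Ao)=\D_{\pi_\omega}$. For fixed $x\in\Ao$ the functional $a\mapsto\overline{\omega(a^{\ast}x)}$ is $\tau_n$-continuous: the involution is isometric, the right multiplication $R_x$ is continuous by axiom (iii) of a normed quasi *-algebra, and $\omega$ is continuous since $\omega\in\crep$. Combining this with the identity above gives
$$\ip{\eta}{\lambda_\omega(x)}=\lim_n\overline{\omega(a_n^{\ast}x)}=\overline{\omega(a^{\ast}x)}=\ip{\pi_\omega(a)\xi_\omega}{\lambda_\omega(x)}\qquad\text{for all }x\in\Ao,$$
and by cyclicity (density of $\D_{\pi_\omega}$ in $\Hil_\omega$) this forces $\eta=V(a)$. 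Thus exactly the continuity of $\omega$ and of the module multiplication, together with cyclicity, are what make the graph closed; everything else is routine.

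Finally, since $\A$ and $\Hil_\omega$ are Banach spaces, the closed graph theorem yields $M>0$ with $\|V(a)\|\le M\|a\|$, hence $\omom(a,a)=\|V(a)\|^2\le M^2\|a\|^2$; the Cauchy–Schwarz inequality for the positive form $\omom$ then gives $|\omom(a,b)|\le M^2\|a\|\,\|b\|$ for all $a,b\in\A$, and in particular $|\vp_\omega(x,y)|=|\omom(x,y)|\le M^2\|x\|\,\|y\|$ on $\Ao\times\Ao$. This is the boundedness needed in the first paragraph, so $D(\overline{\vp}_\omega)=\A$, and $\overline{\vp}_\omega$ coincides with the bounded form $\omom$, hence is everywhere defined and bounded.
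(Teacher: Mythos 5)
Your proof is correct, and every step checks out: the identity $\omom(a,x)=\ip{\pi_\omega(a)\xi_\omega}{\lambda_\omega(x)}=\overline{\omega(a^{\ast}x)}$ uses only $\pi_\omega(a)^{\dagger}=\pi_\omega(a^{\ast})$ and the GNS relations of Theorem \ref{thm_repr}; the closedness of the graph of $V$ follows, as you say, from the continuity of $\omega$, the isometry of the involution, the continuity of $R_x$ and cyclicity; and the closed graph theorem applies because $\A$ is a Banach space and $\Hil_\omega$ is complete. Note that this paper does not actually prove the statement — it quotes it from \cite{AT} — so the only comparison available is with that source: your route is essentially the expected one, namely that continuity of $\omega$ gives the weak estimates $|\ip{V(a)}{\lambda_\omega(x)}|\leq C_x\|a\|$ against the dense GNS domain (this is the content of the characterization of continuity via the weak topology ${\sf t_w}$ alluded to in Section 1), and a Baire-category argument (closed graph here; a uniform-boundedness variant works equally well) upgrades these to the norm bound $\omom(a,a)\leq M^2\|a\|^2$, after which density of $\Ao$ finishes the proof exactly as in your first paragraph. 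The one implicit dependence worth naming is that the very existence of $\overline{\vp}_\omega$ presupposes the closability of $\vp_\omega$, which is the result of \cite{Frag2} quoted just before the proposition; your argument uses this silently, which is legitimate here since the statement itself refers to $\overline{\vp}_\omega$.
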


The previous proposition can be used to show the following result.

\begin{theorem}\cite[Theorem 3.9]{AT}\label{thm_fullrep_semis} Let $(\A, \Ao)$ be a Banach quasi *-algebra with unit $\id$. The following statements are equivalent. 
\begin{itemize}
\item[(i)]$\crep$ is sufficient.
\item[(ii)]$(\A,\Ao)$ is fully representable.
\end{itemize}
If the condition of positivity $(P)$ holds, (i) and  (ii) are equivalent to the following
\begin{itemize}
\item[(iii)]$(\A,\Ao)$ is *-semisimple.
\end{itemize}
\end{theorem}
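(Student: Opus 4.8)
The plan is to establish $(ii)\Leftrightarrow(i)$ with no extra hypothesis and then, assuming the positivity condition \eqref{P}, to prove $(i)\Leftrightarrow(iii)$; the engine throughout is the correspondence between $\crep$ and $\SSA$ given by $\omega\mapsto\omom$ and $\Omega\mapsto\Omega(\cdot,\id)$.

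First I would dispose of $(i)\Leftrightarrow(ii)$. By Definition \ref{fully_rep}, full representability is the conjunction of sufficiency of $\crep$ with the equality $\A_{\mc R}=\A$, and the point is that the second condition is automatic here. Indeed, by Proposition \ref{prop1} the closure $\overline{\vp}_\omega$ is everywhere defined for each $\omega\in\crep$, so $D(\overline{\vp}_\omega)=\A$ and therefore $\A_{\mc R}=\bigcap_{\omega\in\crep}D(\overline{\vp}_\omega)=\A$ (the case $\crep=\{0\}$ being covered by the convention $\A_{\mc R}=\A$). Hence full representability reduces to sufficiency of $\crep$, which is exactly $(i)$.

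Next I would record the correspondence needed for $(iii)$. Given $\omega\in\crep$, Proposition \ref{prop1} makes $\overline{\vp}_\omega$ bounded and everywhere defined; since $\|\lambda_\omega(x)\|^2=\vp_\omega(x,x)$ for $x\in\Ao$, the map $\lambda_\omega$ extends boundedly to $\A$, so $\omom(a,b)=\ip{\lambda_\omega(a)}{\lambda_\omega(b)}$ from \eqref{eq_omom} is a bounded form, whence (after normalizing) $\omom\in\SSA$ with $\omega(a)=\omom(a,\id)$. Conversely, given $\Omega\in\SSA\subset\QA$, the functional $\omega_\Omega(a):=\Omega(a,\id)$ is representable by Proposition \ref{thm_GNS}(iii) and continuous, since $|\Omega(a,\id)|\le\|a\|$; and property (ii) of $\QA$ gives $\Omega(x,y)=\Omega(y^*x,\id)=\vp_{\omega_\Omega}(x,y)$ for $x,y\in\Ao$, so $\Omega$ and $\Omega^{\omega_\Omega}$ are two bounded forms agreeing on the dense subspace $\Ao$, forcing $\Omega=\Omega^{\omega_\Omega}$.

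For $(iii)\Rightarrow(i)$, take $0\neq a\in\A^+$, pick by $*$-semisimplicity $\Omega\in\SSA$ with $\Omega(a,a)>0$, and set $\omega:=\omega_\Omega\in\crep$, so $\Omega=\omom$, $\Omega(a,a)=\|\pi_\omega(a)\xi_\omega\|^2$ and $\omega(a)=\ip{\pi_\omega(a)\xi_\omega}{\xi_\omega}$. The crucial point is that $\pi_\omega(a)$ is a positive symmetric operator on $\D_{\pi_\omega}$: for $a=\sum_k x_k^*x_k\in\Ao^+$ and $\eta=\lambda_\omega(x)$ one computes $\ip{\pi_\omega(a)\eta}{\eta}=\omom(ax,x)=\sum_k\omom(x_kx,x_kx)\ge0$ by property (ii) of $\QA$, and this survives the passage to $\A^+=\overline{\Ao^+}^{\tau_n}$ because $a\mapsto\omom(ax,x)$ is $\tau_n$-continuous. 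Thus $(\eta,\zeta)\mapsto\ip{\pi_\omega(a)\eta}{\zeta}$ is a positive Hermitian form, and Cauchy--Schwarz shows that $\omega(a)=\ip{\pi_\omega(a)\xi_\omega}{\xi_\omega}=0$ would force $\pi_\omega(a)\xi_\omega\perp\D_{\pi_\omega}$, hence $\pi_\omega(a)\xi_\omega=0$ and $\Omega(a,a)=0$, a contradiction; so $\omega(a)>0$.

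For $(i)\Rightarrow(iii)$ I would use \eqref{P}: as noted after its statement, sufficiency together with \eqref{P} implies that $\crep$ separates points, so for $0\neq a\in\A$ there is $\omega\in\crep$ with $\omega(a)\neq0$; since $\omega(a)=\omom(a,\id)$, Cauchy--Schwarz for the positive form $\omom$ gives $0<|\omom(a,\id)|^2\le\omom(a,a)\,\omom(\id,\id)$, whence $\omom(a,a)>0$, and normalizing $\omom$ into $\SSA$ yields the form required by Definition \ref{def1}. I expect the genuine obstacle to be in the previous paragraph: upgrading $\Omega(a,a)>0$ to the strict inequality $\omega(a)>0$ is not formal and depends on showing that $\pi_\omega(a)$ stays positive for every $a$ in the norm-closure $\A^+$, a limiting argument resting on the boundedness of $\omom$ delivered by Proposition \ref{prop1}, after which the Cauchy--Schwarz step closes the gap.
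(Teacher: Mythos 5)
The paper itself contains no proof of this theorem---it is quoted from \cite[Theorem 3.9]{AT}, with only the remark that Proposition \ref{prop1} is the key ingredient---and your argument is a correct reconstruction along exactly those lines: Proposition \ref{prop1} makes $\A_{\mc R}=\A$ automatic, reducing (ii) to (i); the correspondence $\omega\leftrightarrow\Omega^\omega$ between $\crep$ and $\SSA$, the positivity of $a\mapsto\Omega^\omega(ax,x)$ on $\A^+$, and Cauchy--Schwarz then give (i)$\Leftrightarrow$(iii) under \eqref{P}. The one step you leave implicit is routine but worth recording: to conclude that $\Omega^\omega$ is bounded you must identify $\lambda_\omega$ on all of $\A$ with the continuous extension of $\lambda_\omega\upharpoonright_{\Ao}$ (a short argument: if $x_n\to a$ in norm, then $\langle\lambda_\omega(x_n-a)|\lambda_\omega(y)\rangle=\omega(y^*(x_n-a))\to0$ by continuity of $\omega$ and of left multiplication, and norm-boundedness of $\lambda_\omega(x_n)$ upgrades this weak convergence), which is precisely the content supplied in \cite{AT} alongside Proposition \ref{prop1}.
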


Theorem \ref{thm_fullrep_semis} shows the deep relationship between fully representability and *-semisimplicity for a Banach quasi *-algebra. Under the condition of positivity (\ref{P}), the families of sesquilienar forms involved can be identified. 

\section{Derivations and their closability}
In this section, we present an appropriate definition of derivation in the case of a *-semisimple Banach quasi *-algebra. For detailed discussion, see \cite{AT2,Ant2,HP}. In the case it is a generator of a *-automorphisms group, the derivation is closed and it owns a certain spectrum. Employing sesquilinear forms introduced in the previous section, it is possible to show a condition of closability for general derivations.
\medskip

{
Through the sesquilinear forms $\varphi\in\SSA$, we can define a new multiplication in $\A$ as in \cite{ct1}.
\smallskip

Let $(\A,\Ao)$ be a *-semisimple Banach quasi *-algebra. Let $a,b\in\A$. We say that the {\it weak} multiplication $a\wmult b$ is well-defined if there exists a (necessarily unique) $c\in\A$ such that:
$$
\varphi(bx,a^*y)=\varphi(cx,y),\;\forall\, x,y\in\Ao, \forall\,\varphi\in\SSA.
$$
In this case, we put $a\wmult b:=c$.

\begin{definition}
Let $(\A,\Ao)$ be a *-semisimple Banach quasi *-algebra. An element $a\in\A$ is called \textit{bounded} if the following equivalent conditions hold
\begin{itemize}
\item[(i)] the multiplication operators $L_a$ and $R_a$ are $\|\cdot\|-$continuous;
\item[(ii)] $R_w(a)=L_w(a)=\A$, where $R_w(a)$ (resp. $L_w(a)$) is the space of universal right (resp. left) weak multipliers of $a$. 
\end{itemize}
\end{definition}
}

Let $(\A,\Ao)$ be a *-semisimple Banach quasi *-algebra and $\theta:\A\to \A$ a linear bijection. We say that $\theta$ is a weak *-automorphism of $(\A,\Ao)$ if
\begin{itemize}
\item[(i)] $\theta(a^*)=\theta(a)^*$, for every $a \in \A$;
\item[(ii)] $\theta(a)\wmult \theta(b)$ is well defined if, and only if, $a\wmult b$ is well defined and, in this case, $$\theta(a\wmult b)=\theta(a)\wmult \theta(b).$$\end{itemize}

By the previous definition, if $\theta$ is a weak *-automorphism, then $\theta^{-1}$ is a weak *-automorphism too.
\smallskip

Let $(\A,\Ao)$ be a *-semisimple Banach quasi *-algebra. Suppose that for every fixed $t\in {\mb R}$, $\beta_t$ is a weak *-automorphism of $\A$. If
 \begin{itemize} \item[(i)]$\beta_0(a)=a,$ $\forall a\in \A$  \item[(ii)] $\beta_{t+s}(a)= \beta_t(\beta_s(a))$, $\forall a\in \A$ \end{itemize}then we say that $\beta_t$ is a {\em one-parameter group of weak *-automorphisms of $(\A,\Ao)$}.
 If $\tau$ is a topology on $\A$ and the map $t\mapsto \beta_t(a)$ is $\tau$-continuous, for every $a\in \A$, we say that $\beta_t$ is a $\tau$-continuous weak *-automorphism group.
\smallskip

The definition of the infinitesimal generator of $\beta_t$ is now quite natural. If $\beta_t$ is
$\tau$-continuous, we set

$$ \mathcal{D}(\delta_\tau)=\left\{a\in \A: \lim_{t\to 0} \frac{\beta_t(a)-a}{t} \mbox{ exists in $\A[\tau]$}\right\}$$
and
$$ \delta_\tau (a)=\tau-\lim_{t\to 0} \frac{\beta_t(a)-a}{t}, \quad a \in  \mathcal{D}(\delta_\tau).$$

If the involution $a\mapsto a^*$ is $\tau$-continuous, then $a\in \mathcal{D}(\delta_\tau)$ implies $a^*\in \mathcal{D}(\delta_\tau)$ and
$\delta(a^*)=\delta(a)^*$. 

We are now giving an appropriate definition of *-derivation, weakening the Leibnitz rule thanks to sesquilinear forms.

\begin{definition}\cite[Definition 4.5]{AT2}\label{defn_deriv}
Let $(\A,\Ao)$ be a *-semisimple Banach quasi *-algebra and $\delta$ a linear map of $\mathcal{D}(\delta)$ into $\A$, where $\mathcal{D}(\delta)$ is a partial *-algebra with respect to the weak multiplication $\wmult$. We say that $\delta$ is a {\em  weak *-derivation} of $(\A,\Ao)$ if
\begin{itemize}
\item[(i)] $\Ao\subset\mathcal{D}(\delta)$
\item[(ii)]$\delta(x^*)=\delta(x)^*, \; \forall x \in \Ao$
\item[(iii)] if $a,b\in\mathcal{D}(\delta)$ and $a\wmult b$ is well defined, then $a\wmult b\in\mathcal{D}(\delta)$ and
$$\vp(\delta(a\wmult b)x,y)= \vp(bx,\delta(a)^*y)+\vp(\delta(b)x,a^*y),$$
for all $\vp\in \SSA$, for every $x,y \in \Ao$.
\end{itemize}
\end{definition}

Parallel to the case of C*-algebras, to a \textit{uniformly bounded} norm continuous weak *-automorphisms group there corresponds a closed weak *-derivation that generates the group (see \cite{Brat2}). 

\begin{theorem}\cite[Theorem 5.1]{AT2}\label{HY1} Let $\delta:\mathcal{D}(\delta)\to\A[\|\cdot\|]$ be a weak *-derivation on a *-semisimple Banach quasi *-algebra $(\A,\Ao)$. Suppose that $\delta$ is the infinitesimal generator of a uniformly bounded, {$\tau_n$-continuous} group of weak *-automorphisms of $(\A,\Ao)$. Then $\delta$ is closed; its resolvent set $\rho(\delta)$ contains {$\mathbb{R}\setminus\{0\}$} and
\begin{equation}\label{eqn_lowbound}\|\delta(a)-\lambda a\|\geq|\lambda|\,\|a\|,\quad a\in\mathcal{D}(\delta),\lambda \in {\mb R}.\end{equation}
\end{theorem}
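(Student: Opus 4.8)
The plan is to view $\{\beta_t\}_{t\in\mathbb{R}}$ as a $\tau_n$-continuous (hence strongly continuous) one-parameter group of bounded operators on the Banach space $\A[\|\cdot\|]$, with $M:=\sup_{t\in\mathbb{R}}\|\beta_t\|<\infty$, and to transplant the Hille--Yosida machinery for such groups to the present setting; the weak *-derivation and *-semisimplicity hypotheses enter only as the standing framework and not through the analytic estimates. First I would note that $\delta$ is densely defined, since $\Ao\subset\mathcal{D}(\delta)$ and $\Ao$ is dense in $\A$. For closedness I would use the orbit identity $\beta_t(a)-a=\int_0^t\beta_s(\delta(a))\,ds$ for $a\in\mathcal{D}(\delta)$, the integral being a convergent Bochner integral because $s\mapsto\beta_s(\delta(a))$ is norm-continuous and bounded. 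If $a_n\to a$ and $\delta(a_n)\to b$, uniform boundedness of $\{\beta_s\}$ permits passing to the limit under the integral sign to obtain $\beta_t(a)-a=\int_0^t\beta_s(b)\,ds$; dividing by $t$ and letting $t\to0$ yields $a\in\mathcal{D}(\delta)$ and $\delta(a)=b$, so $\delta$ is closed. Here I use that $\beta_t$ leaves $\mathcal{D}(\delta)$ invariant and commutes with $\delta$, a direct consequence of the group law.

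To locate the resolvent I would introduce, for $\lambda>0$, the bounded operator
$$R(\lambda)a=\int_0^{\infty}e^{-\lambda t}\beta_t(a)\,dt,\qquad a\in\A,$$
the Bochner integral converging thanks to $\|e^{-\lambda t}\beta_t(a)\|\le Me^{-\lambda t}\|a\|$, so that $\|R(\lambda)a\|\le\tfrac{M}{\lambda}\|a\|$. A standard computation with the group law shows $R(\lambda)$ maps $\A$ into $\mathcal{D}(\delta)$ and that $(\lambda-\delta)R(\lambda)=I$ on $\A$ and $R(\lambda)(\lambda-\delta)=I$ on $\mathcal{D}(\delta)$; hence $\lambda\in\rho(\delta)$ for every $\lambda>0$. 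Repeating the construction for the group $t\mapsto\beta_{-t}$, whose generator is $-\delta$, gives $(\mu+\delta)^{-1}=\int_0^{\infty}e^{-\mu t}\beta_{-t}\,dt$ for $\mu>0$, so every $\lambda<0$ likewise lies in $\rho(\delta)$. Therefore $\rho(\delta)\supseteq\mathbb{R}\setminus\{0\}$.

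For the lower bound I would write, for $\lambda>0$ and $a\in\mathcal{D}(\delta)$, the identity $a=R(\lambda)(\lambda a-\delta(a))$, whence $\|a\|\le\tfrac{M}{\lambda}\|\delta(a)-\lambda a\|$, that is $\|\delta(a)-\lambda a\|\ge\tfrac{\lambda}{M}\|a\|$; the case $\lambda<0$ is symmetric via $\beta_{-t}$. To sharpen $\tfrac{|\lambda|}{M}$ to the constant $|\lambda|$ of \eqref{eqn_lowbound} I would replace $\|\cdot\|$ by the equivalent norm $\|a\|_\beta:=\sup_{t\in\mathbb{R}}\|\beta_t(a)\|$, under which each $\beta_t$ is isometric and $M=1$, and rerun the estimate.

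The main obstacle I anticipate lies precisely in this constant. The Laplace-transform estimate only produces $\tfrac{|\lambda|}{M}\|a\|$ in the original norm, and the renormalized norm $\|\cdot\|_\beta$ returns the sharp bound only within $\|\cdot\|_\beta$ itself; since $\|\cdot\|\le\|\cdot\|_\beta\le M\|\cdot\|$, transferring back reintroduces $M$. Thus \eqref{eqn_lowbound} as stated is essentially the assertion that $\{\beta_t\}$ is isometric in $\|\cdot\|$, and the delicate point is to argue that, for a uniformly bounded group of weak *-automorphisms of a *-semisimple Banach quasi *-algebra, one may indeed take $M=1$ --- either by reading \eqref{eqn_lowbound} in the canonical isometric norm $\|\cdot\|_\beta$, or by showing that weak *-automorphisms preserve the *-semisimple norm. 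A secondary, more routine point is checking that $R(\lambda)$ genuinely lands in the domain $\mathcal{D}(\delta)$, which is only a partial *-algebra, rather than merely in $\A$.
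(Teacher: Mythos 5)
Your route is the classical Hille--Yosida/Laplace-transform argument, and that is precisely the intended one: the note states Theorem \ref{HY1} without proof, quoting \cite[Theorem 5.1]{AT2}, where the statement is reduced to the classical theory of one-parameter groups of bounded operators on a Banach space (whence the reference \cite{HP} in the bibliography); as you observe, *-semisimplicity and the weak *-derivation structure play no role in the analytic estimates. Your arguments for closedness (the orbit identity $\beta_t(a)-a=\int_0^t\beta_s(\delta(a))\,ds$ plus uniform boundedness) and for $\mathbb{R}\setminus\{0\}\subseteq\rho(\delta)$ (the Laplace transform $R(\lambda)$ for $\lambda>0$, and the reversed group $t\mapsto\beta_{-t}$, whose generator is $-\delta$, for $\lambda<0$) are complete and correct. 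Your ``secondary point'' is in fact harmless: $\mathcal{D}(\delta)$ is by hypothesis the domain of the infinitesimal generator of $\{\beta_t\}$, so the usual difference-quotient computation already shows $R(\lambda)a\in\mathcal{D}(\delta)$; the partial *-algebra structure of $\mathcal{D}(\delta)$ is irrelevant for that step.

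The obstacle you isolate concerning the constant in \eqref{eqn_lowbound} is genuine, and your diagnosis is exactly right. Given that $\delta$ generates the group, the estimate \eqref{eqn_lowbound} with constant $|\lambda|$ is \emph{equivalent}, by the generation theorem for groups, to $\|(\lambda-\delta)^{-1}\|\leq 1/|\lambda|$ for all real $\lambda\neq0$, hence to $\{\beta_t\}$ acting by isometries of $\A[\|\cdot\|]$; uniform boundedness with constant $M$ can only ever yield $\|\delta(a)-\lambda a\|\geq(|\lambda|/M)\|a\|$, and no argument removes the $M$ in the original norm, since nothing in the definition of a weak *-automorphism of a *-semisimple Banach quasi *-algebra forces isometry --- in contrast with the C*-algebra situation of \cite{Brat2}, where *-automorphisms are automatically isometric and the sharp bound comes for free. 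The source handles this exactly as you propose: one passes to the equivalent norm $\|a\|_{\beta}=\sup_{t\in\mathbb{R}}\|\beta_t(a)\|$, for which the group is isometric, so that \eqref{eqn_lowbound} is the sharp Hille--Yosida estimate in that norm; read in the original norm it carries the factor $1/M$, i.e.\ the theorem tacitly takes $M=1$. This is also the only reading under which Theorem \ref{HY2} is a true converse, since \eqref{eqn_lowbound} forces the group it generates to be isometric. In short, your proposal reproduces the intended proof in full; the one thing it cannot supply --- the constant $|\lambda|$ rather than $|\lambda|/M$ in the given norm --- is an imprecision of the statement as quoted, not a missing idea in your argument.
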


The converse of Theorem \ref{HY1} can be proven assuming further conditions, for instance that the domain of the weak *-derivation is made of bounded elements, which turn out to be satisfied in some interesting situations such as the weak derivative in $L^p-$spaces.

\begin{theorem}\cite[Theorem 5.3]{AT2}\label{HY2} Let $\delta:\mathcal{D}(\delta)\subset\A_{\bou}\to\A[\|\cdot\|]$ be a closed weak *-derivation on a *-semisimple Banach quasi *-algebra $(\A,\Ao)$. Suppose that $\delta$ verifies the same conditions on its spectrum of Theorem \ref{HY1} and $\Ao$ is a core for every multiplication operator $\hat{L_a}$ for $a\in\A$, i.e. $\hat{L}_a=\overline{L}_a$. Then $\delta$ is the infinitesimal generator of a uniformly bounded, {$\tau_n$-continuous} group of weak *-automorphisms of $(\A,\Ao)$.
\end{theorem}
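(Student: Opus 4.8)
The plan is to first manufacture the group out of $\delta$ by a Hille--Yosida argument, obtaining a $C_0$-group of bounded operators on the Banach space $\A[\|\cdot\|]$, and then to upgrade this purely analytic group to a group of weak *-automorphisms using the weak Leibniz rule of Definition \ref{defn_deriv}(iii) together with *-semisimplicity. For the generation step I would first note that $\delta$ is densely defined, since $\Ao\subset\mathcal{D}(\delta)$ and $\Ao$ is $\|\cdot\|$-dense in $\A$. The spectral hypotheses inherited from Theorem \ref{HY1} give $\mathbb{R}\setminus\{0\}\subset\rho(\delta)$ together with the bound \eqref{eqn_lowbound}, which rewrites as $\|(\lambda-\delta)^{-1}\|\le|\lambda|^{-1}$ for every real $\lambda\neq0$. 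As the constant equals $1$, submultiplicativity gives the higher powers for free, $\|(\lambda-\delta)^{-n}\|\le|\lambda|^{-n}$, so the Hille--Yosida estimates hold for $\delta$ on $(0,\infty)$ and for $-\delta$ on $(0,\infty)$ (compare the C*-setting in \cite{Brat2}). Hence both $\delta$ and $-\delta$ generate $C_0$-semigroups of contractions, so $\delta$ is the infinitesimal generator of a $C_0$-group $\{\beta_t\}_{t\in\mathbb{R}}$ with $\|\beta_t\|\le1$; in particular the group is uniformly bounded (indeed isometric) and $t\mapsto\beta_t(a)$ is $\tau_n$-continuous. From the general theory I also retain that $\beta_t(\mathcal{D}(\delta))\subset\mathcal{D}(\delta)$ and $\delta\beta_t=\beta_t\delta$ on $\mathcal{D}(\delta)$.

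Next I would verify that each $\beta_t$ preserves the involution. Since $\mathcal{D}(\delta)$ is $*$-invariant and $\delta(a^*)=\delta(a)^*$ there, solving $(\lambda-\delta)b=a$ and taking adjoints shows $(\lambda-\delta)^{-1}(a^*)=\bigl((\lambda-\delta)^{-1}a\bigr)^*$ for real $\lambda$. Consequently the Yosida approximants $\delta_\lambda=\lambda^2(\lambda-\delta)^{-1}-\lambda$ commute with the (isometric, antilinear) involution, and since $\exp(t\delta_\lambda)$ converges strongly to $\beta_t$ the identity $\beta_t(a^*)=\beta_t(a)^*$ passes to the limit by continuity of $*$.

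The heart is the multiplicativity. Because $\mathcal{D}(\delta)\subset\A\bou$, for $a\in\mathcal{D}(\delta)$ one also has $\beta_t(a)\in\mathcal{D}(\delta)\subset\A\bou$, so every weak product formed from orbit elements is everywhere defined. Fixing $a,b\in\mathcal{D}(\delta)$, I would apply Definition \ref{defn_deriv}(iii) to $\beta_t(a)\wmult\beta_t(b)$ and compare, for each $\vp\in\SSA$ and $x,y\in\Ao$, with the defining relation of the weak multiplication; passing from the form level to an equality of elements via *-semisimplicity (an element $c$ with $\vp(cx,y)=0$ for all $\vp\in\SSA$, $x,y\in\Ao$ must vanish) yields the elementwise Leibniz identity
\[
\delta\bigl(\beta_t(a)\wmult\beta_t(b)\bigr)=\delta(\beta_t(a))\wmult\beta_t(b)+\beta_t(a)\wmult\delta(\beta_t(b)).
\]
I would then show that $Q(t):=\beta_t(a)\wmult\beta_t(b)\in\mathcal{D}(\delta)$ is $\|\cdot\|$-differentiable with $Q'(t)=\delta(Q(t))$: writing the difference quotient through bilinearity of $\wmult$ and using that left and right weak multiplication by a bounded element is $\|\cdot\|$-continuous, the derivative is $\delta(\beta_t a)\wmult\beta_t b+\beta_t a\wmult\delta(\beta_t b)$, which equals $\delta(Q(t))$ by the identity above. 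Thus $Q$ and $t\mapsto\beta_t(a\wmult b)$ are classical solutions of the same abstract Cauchy problem $\dot w=\delta(w)$, $w(0)=a\wmult b$, and well-posedness for the generator $\delta$ forces $\beta_t(a)\wmult\beta_t(b)=\beta_t(a\wmult b)$ for $a,b\in\mathcal{D}(\delta)$. Finally I would propagate this identity and the definedness of products from the dense domain to all of $\A$ by continuity, obtaining the ``if and only if'' of the definition of weak *-automorphism by applying the established implication to $\beta_t(a),\beta_t(b)$ with parameter $-t$, and conclude that $\{\beta_t\}$ is a uniformly bounded, $\tau_n$-continuous group of weak *-automorphisms generated by $\delta$.

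The main obstacle is exactly the norm-differentiation of $Q(t)=\beta_t(a)\wmult\beta_t(b)$ together with the final density extension. The group $\{\beta_t\}$ is continuous for $\|\cdot\|$, whereas the weak multiplication is jointly continuous only for the finer multiplier norm on $\A\bou$, so one must control the multiplier norms of the orbit increments $\beta_{t+h}(b)-\beta_t(b)$ by $\|\cdot\|$. This is precisely where the two standing hypotheses are indispensable: the assumption $\mathcal{D}(\delta)\subset\A\bou$ keeps all relevant elements bounded, hence universal multipliers with $\|\cdot\|$-continuous $L$ and $R$, while the requirement that $\Ao$ be a core for every multiplication operator, $\hat L_a=\overline{L_a}$, lets one reduce the multiplier estimates and the elementwise identities to the dense *-subalgebra $\Ao$ on which the weak Leibniz rule is formulated. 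Once this continuity bridge between the two norms is secured, the remaining verifications are routine.
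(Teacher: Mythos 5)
A preliminary remark: this paper does not actually prove Theorem \ref{HY2}; it is quoted, without proof, from \cite{AT2} (Theorem 5.3 there), so your attempt can only be measured against the strategy of that reference and against internal correctness. Your skeleton is the expected one: Hille--Yosida applied to $\pm\delta$ (closed, densely defined since $\Ao\subset\mathcal{D}(\delta)$, with $\|(\lambda-\delta)^{-1}\|\le|\lambda|^{-1}$ from \eqref{eqn_lowbound} and $\mathbb{R}\setminus\{0\}\subset\rho(\delta)$) produces an isometric $C_0$-group $\{\beta_t\}$, and then one upgrades $\beta_t$ to weak *-automorphisms. That first, purely semigroup-theoretic step is carried out correctly.

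The genuine gap is the step you yourself flag as ``the main obstacle'' and then leave unresolved: the norm-differentiability of $Q(t)=\beta_t(a)\wmult\beta_t(b)$. Separate continuity does handle the term $\beta_t(a)\wmult\delta(\beta_t(b))$, since $\beta_t(a)\in\mathcal{D}(\delta)\subset\A\bou$ makes $L_{\beta_t(a)}$ a bounded operator on $\A$. But the other term requires
\[
\frac{\beta_{t+h}(a)-\beta_t(a)}{h}\wmult\beta_{t+h}(b)\ \longrightarrow\ \delta(\beta_t(a))\wmult\beta_t(b)\qquad(h\to0),
\]
where both factors move. Estimating it through $R_{\beta_{t+h}(b)}$ one needs $\sup_h\|R_{\beta_{t+h}(b)}\|<\infty$ and $\|R_{\beta_{t+h}(b)-\beta_t(b)}\|\to0$; these are \emph{multiplier} norms of bounded elements, and they are not dominated by $\|\cdot\|$ --- this is exactly the meaning of ``the weak multiplication is only separately continuous.'' So the phrase ``using that left and right weak multiplication by a bounded element is $\|\cdot\|$-continuous'' does not produce the derivative: the continuity constants vary with $h$ and are precisely what must be controlled. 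Closing this requires real work --- e.g.\ a closed-graph argument giving $\|c\|_{\A\bou}\le C(\|c\|+\|\delta(c)\|)$ on $\mathcal{D}(\delta)$ so that orbits are continuous in the multiplier norm, together with a passage to $\mathcal{D}(\delta^2)$ (dense by mollification) so that difference quotients converge in graph norm, and/or an essential use of the hypothesis $\hat{L}_a=\overline{L}_a$. In your write-up that core hypothesis is never actually used anywhere; it is only named as the thing that ``should'' make the argument work, and since this is the heart of the theorem (the rest is the C*-template of \cite{Brat2}), the proposal is incomplete. Two further gaps of the same nature: (a) your *-preservation step needs $\delta(a^*)=\delta(a)^*$ for all $a\in\mathcal{D}(\delta)$, whereas Definition \ref{defn_deriv}(ii) grants it only on $\Ao$ (one would need $\Ao$ to be a core for $\delta$, which is not assumed); (b) the concluding ``propagate by continuity'' of the if-and-only-if to arbitrary $a,b\in\A$ cannot be done by norm density alone, again because $\wmult$ is not jointly continuous --- this is the second place where $\hat{L}_a=\overline{L}_a$ must do actual work, and where an argument is missing.
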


In the sequel, we focus our attention on a special case of weak *-derivations, those for which $\D(\delta)=\Ao$. These weak *-derivations are called \textit{qu*-derivations}.
\medskip

Let $(\A,\Ao)$ be a quasi *-algebra, $\delta$ be a qu*-derivation of $(\A,\Ao)$ and $\pi$ be a *-representation of $(\A,\Ao)$. Assume that \begin{equation}\label{eqn_cond_inv}\mbox{whenever }x\in\Ao \mbox{ is such that } \pi(x)=0, \mbox{ then }\pi(\delta(x))=0.\end{equation} Under this assumption, the linear map
$$\delta_{\pi}(\pi(x)):=\pi(\delta(x)),\quad x\in\Ao$$
is well defined on $\pi(\Ao)$ with values in $\pi(\A)$ and it is easily checked that $\delta_{\pi}$ is a qu*-derivation of $\Ao$ named \textit{induced by $\pi$}.

\begin{definition}
Let $(\A,\Ao)$ be a quasi *-algebra, $\delta$ be a qu*-derivation of $(\A,\Ao)$. Furthermore, let $\pi$ be a cyclic *-representation of $(\A,\Ao)$ with cyclic vector $\xi_0$ satisfying the assumption \eqref{eqn_cond_inv}. The induced qu*-derivation $\delta_{\pi}$ is \textit{spatial} if there exists $H=H^{\dagger}\in\mathcal{L}(\mathcal{D}_{\pi},\mathcal{H}_{\pi})$ such that
$$\delta_{\pi}(\pi(x))=i[H,\pi(x)],\quad x\in\Ao.$$
\end{definition}

\begin{proposition}\label{1} Let $(\A,\Ao)$ be a Banach quasi *-algebra with unit $\id$ and let $\delta$ be a qu*-derivation of $(\A,\Ao)$. Suppose that there exists a representable and continuous functional $\omega$ with $\omega(\delta(x))=0$ for
$x\in \Ao$ and let $(\H_\omega, \pi_\omega, \lambda_\omega)$ the GNS-construction associated to $\omega$. Suppose that $\pi_\omega$ is a faithful *-representation of $(\A,\Ao)$. Then there exists an element $H = H^\dag$ of ${\mathcal L}\ad(\lambda_\omega(\Ao))$ such that
$$\pi_\omega(\delta(x)) =-i[H,\pi_\omega(x)], \quad \forall x\in \Ao$$
and $\delta$ is closable.
\end{proposition}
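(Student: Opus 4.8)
The plan is to realise $\delta$ as an inner (spatial) derivation inside the GNS triple $(\pi_\omega,\lambda_\omega,\H_\omega)$ of Theorem \ref{thm_repr}, and then to read off closability from the boundedness and faithfulness already built into that representation. Since Theorem \ref{thm_repr} guarantees $\lambda_\omega(\A)\subseteq\lambda_\omega(\Ao)=\D_{\pi_\omega}$, I would define an operator $H$ on the dense domain $\D_{\pi_\omega}=\{\lambda_\omega(x):x\in\Ao\}$ by
$$H\lambda_\omega(x):=i\,\lambda_\omega(\delta(x)),\qquad x\in\Ao,$$
the choice $i$ (not $-i$) being forced by the sign convention in the statement. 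The first task is to check that this is a well-defined linear operator carrying $\D_{\pi_\omega}$ into $\D_{\pi_\omega}$: the inclusion into the domain is immediate from $\lambda_\omega(\A)\subseteq\D_{\pi_\omega}$, while well-definedness reduces to showing $\lambda_\omega(x)=0\Rightarrow\lambda_\omega(\delta(x))=0$, which I obtain below.

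The heart of the argument is two identities for $\omom$, both read off from the weak Leibniz rule of Definition \ref{defn_deriv}(iii). Since $\omega\in\crep$, Proposition \ref{prop1} shows $\omom$ is bounded, so a positive multiple of it lies in $\SSA$ and, by homogeneity in $\vp$, the Leibniz rule may be tested on $\vp=\omom$. Taking $a=x,b=y\in\Ao$ (so $a\wmult b=xy$) with test vectors $u=\id$, $v=z\in\Ao$ gives
$$\omom(\delta(xy),z)=\omom(y,\delta(x)^*z)+\omom(\delta(y),x^*z),\qquad x,y,z\in\Ao.$$
Taking instead $u=v=\id$, the left-hand side becomes $\omom(\delta(ab),\id)=\omega(\delta(ab))$, which vanishes by the hypothesis $\omega(\delta(\,\cdot\,))=0$ on $\Ao$; substituting $a=y^*,b=x$ and using $\delta(y^*)=\delta(y)^*$ then yields the skew-symmetry relation
$$\omom(\delta(x),y)+\omom(x,\delta(y))=0,\qquad x,y\in\Ao.$$
Recalling $\ip{\lambda_\omega(a)}{\lambda_\omega(b)}=\omom(a,b)$, the skew relation says exactly $\ip{H\lambda_\omega(x)}{\lambda_\omega(y)}=\ip{\lambda_\omega(x)}{H\lambda_\omega(y)}$, i.e. $H$ is symmetric; hence $\D_{\pi_\omega}\subseteq\D(H^*)$ and $H=H^{\dagger}\in{\mathcal L}\ad(\lambda_\omega(\Ao))$. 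The same relation, used when $\lambda_\omega(x)=0$, forces $\omom(\delta(x),y)=0$ for all $y\in\Ao$ and then, by density of $\Ao$ and continuity of $\omom$, for all $y\in\A$; taking $y=\delta(x)$ gives $\lambda_\omega(\delta(x))=0$, which settles well-definedness.

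For the spatial identity I would verify $\pi_\omega(\delta(x))=-i[H,\pi_\omega(x)]$ weakly, checking that $\ip{\pi_\omega(\delta(x))\lambda_\omega(y)}{\lambda_\omega(z)}$ and $\ip{-i[H,\pi_\omega(x)]\lambda_\omega(y)}{\lambda_\omega(z)}$ agree for all $x,y,z\in\Ao$; since $\lambda_\omega(\Ao)$ is total in $\H_\omega$, equality of these scalars upgrades to equality of the vectors, hence of the operators on $\D_{\pi_\omega}$. The left side is $\omom(\delta(x)y,z)=\omom(y,\delta(x)^*z)$ by property (ii) of $\QA$. Expanding the right side with the definition of $H$, shifting $\pi_\omega(x)$ across the inner product via $\pi_\omega(x)^{\dagger}=\pi_\omega(x^*)$ (legitimate because the remaining test element $z$ lies in $\Ao$, so $\pi_\omega(x^*)\lambda_\omega(z)=\lambda_\omega(x^*z)$), and inserting the displayed Leibniz identity, collapses it to the same $\omom(y,\delta(x)^*z)$. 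The main obstacle throughout is precisely this bookkeeping: because $\delta(x)\in\A\setminus\Ao$ one cannot treat $\pi_\omega(\delta(x))$ as a two-sided multiplier, so every step must be routed through $\omom$ and through the intertwining $\omom(ax,y)=\omom(x,a^*y)$, which is available only when the test elements sit in $\Ao$; keeping the $i/{-i}$ signs coherent is the other easy place to slip.

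Closability then follows cleanly once $H=H^{\dagger}$ is in hand. Continuity of $\omega$ makes $\pi_\omega$ continuous from $(\A,\tau_n)$ to $({\sf t_w})$, since $|\ip{\pi_\omega(a)\lambda_\omega(y)}{\lambda_\omega(z)}|=|\omega(z^*ay)|\le\|\omega\|\,\|L_{z^*}\|\,\|R_y\|\,\|a\|$ for fixed $y,z\in\Ao$. So if $x_n\to0$ and $\delta(x_n)\to b$ in $\A[\tau_n]$, then $\pi_\omega(x_n)\to0$ and $\pi_\omega(\delta(x_n))\to\pi_\omega(b)$ in ${\sf t_w}$. Writing $\pi_\omega(\delta(x_n))=-i[H,\pi_\omega(x_n)]$ and using $H=H^{\dagger}$ to move $H$ onto the fixed vectors $\xi,\eta\in\D_{\pi_\omega}$, one finds $\ip{H\pi_\omega(x_n)\xi}{\eta}=\ip{\pi_\omega(x_n)\xi}{H\eta}\to0$ and $\ip{\pi_\omega(x_n)H\xi}{\eta}\to0$, so $\pi_\omega(\delta(x_n))\to0$ in ${\sf t_w}$. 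Uniqueness of weak limits gives $\pi_\omega(b)=0$, and faithfulness of $\pi_\omega$ yields $b=0$; therefore $\delta$ is closable.
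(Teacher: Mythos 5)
Your proposal is correct, and its skeleton coincides with the paper's own proof: the same operator $H\lambda_\omega(x)=i\lambda_\omega(\delta(x))=i\pi_\omega(\delta(x))\xi_\omega$, the same skew-symmetry relation $\omom(\delta(x),y)=-\omom(x,\delta(y))$ (extracted from $\omega(\delta(\,\cdot\,))=0$) doing double duty for well-definedness and symmetry of $H$, the same commutator identity, and the same endgame via faithfulness of $\pi_\omega$. Two execution choices differ genuinely, though. (i) \emph{Leibniz rule}: the paper works with the strong module form of the rule, writing $y^*\delta(x)=\delta(y^*x)-\delta(y^*)x$ as an identity in $\A$ and then applying $\pi_\omega$; you never leave the weak setting, testing Definition \ref{defn_deriv}(iii) on $\vp=\omom$ after noting that Proposition \ref{prop1} lets a positive multiple of $\omom$ be placed in $\SSA$. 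Your route is the one that matches the paper's literal definition of a qu*-derivation (a weak *-derivation with $\mathcal{D}(\delta)=\Ao$), whereas the paper's computation tacitly assumes the pointwise Leibniz rule in $\A$; the price you pay is the $\SSA$-membership check, which implicitly rests on the identification $\omom=\overline{\vp}_\omega$ (valid in the unital Banach case and also used tacitly by the paper). (ii) \emph{Closability}: the paper evaluates $\omega(z^*w^*x)$ for the norm limit $w$ of $\delta(x_n)$ and pushes the limit through the commutator formula, ending with a term $\Omega_\omega(x,x_n\delta(z))$ whose vanishing needs an extra step (norm continuity of $x\mapsto xb$ for fixed $b\in\A$ is not an axiom of a Banach quasi *-algebra; one must reroute through $\lambda_\omega(\delta(z))\in\lambda_\omega(\Ao)$). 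You instead prove that $\pi_\omega$ is continuous from $\A[\tau_n]$ into the weak topology ${\sf t_w}$, show $[H,\pi_\omega(x_n)]\to0$ in ${\sf t_w}$ by moving $H$ onto fixed vectors of $\D_{\pi_\omega}$ (legitimate since $H\D_{\pi_\omega}\subseteq\D_{\pi_\omega}$ and $H$ is symmetric), and conclude by uniqueness of ${\sf t_w}$-limits plus faithfulness. This is the same mechanism as the paper's, executed more tightly: it avoids the loose term entirely, and is the cleaner of the two arguments.
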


\begin{proof} Define $H$ on $\lambda_\omega(\Ao)$ by
$$ H\lambda_\omega(x)=i\pi_\omega(\delta(x))\xi_\omega, \quad x\in \Ao$$
where $\xi_\omega=\lambda_\omega(\id)$.
We first prove that $H$ is well defined. We have
\begin{align*}
\ip{\pi_\omega(\delta(x))\xi_\omega}{\pi_\omega(y)\xi_\omega}&= \ip{\pi_\omega(y^*)\pi_\omega(\delta(x))\xi_\omega}{\xi_\omega}\\
&=\ip{\pi_\omega(y^*\delta(x))\xi_\omega}{\xi_\omega}\\
&= \ip{\pi_\omega(\delta(y^*x)-\delta(y^*)x)\xi_\omega}{\xi_\omega}\\
&=-\ip{\pi_\omega(\delta(y^*)x)\xi_\omega}{\xi_\omega}\\
&=-\ip{\pi_\omega(x)\xi_\omega}{\pi_\omega(\delta(y))\xi_\omega}.
\end{align*}
Hence if $\lambda_\omega(x)= \pi_\omega(x)\xi_\omega=0$, it follows that $\ip{\pi_\omega(\delta(x)\xi_\omega}{\pi_\omega(y)\xi_\omega}=0$, for every $y\in \Ao$.
This in turn implies that $\pi_\omega(\delta(x))\xi_\omega=0$.

The above computation shows also that $H$ is symmetric. Indeed,

\begin{align*}\ip{H\lambda_\omega(x)}{\lambda_\omega(y)}&=i\ip{\pi_\omega(\delta(x))\xi_\omega}{\pi_\omega(y)\xi_\omega}\\ &=-i\ip{\pi_\omega(x)\xi_\omega}{\pi_\omega(\delta(y))\xi_\omega}\\
&= \ip{\lambda_\omega(x)}{H\lambda_\omega(y)}.\end{align*}
Finally, if $x\in \Ao$,
\begin{align*}\pi_\omega(\delta(x))\lambda_\omega(y)&= \pi_\omega(\delta(x))\mult \pi_\omega(y) \xi_\omega\\
&=\pi_\omega(\delta(xy))\xi_\omega -\pi_\omega(x)\mult\pi_\omega(\delta(y))\xi_\omega\\
&=-iH\pi_\omega(x)\lambda_\omega(y)+i \pi_\omega(x)H\lambda_\omega(y)\\
&=-i[H,\pi_{\omega}(x)]\lambda_\omega(y), \quad \forall y \in \Ao. \quad\qed
\end{align*}
Consider now a sequence $x_n\in\Ao$ such that $\|x_n\|\to0$ and there exists $w\in\A$ for which $\|\delta(x_n)-w\|\to0$ for $n\to\infty$. Then, for every $y,z\in\Ao$, we have
\begin{align*}
\omega(z^*w^*x)&=\ip{\pi_\omega(x)\xi_\omega}{\pi_\omega(w)\wmult\pi_\omega(z)\xi_\omega}\\
&=\lim_{n\to\infty}\ip{\pi_\omega(x)\xi_\omega}{\pi_\omega(\delta(x_n))\wmult\pi_\omega(z)\xi_\omega}\\
&=i\lim_{n\to\infty}\ip{\pi_\omega(x)\xi_\omega}{H\pi_\omega(x_n)\wmult\pi_\omega(z)\xi_\omega}\\
&-i\lim_{n\to\infty}\ip{\pi_\omega(x)\xi_\omega}{\pi_\omega(x_n)\wmult H\pi_\omega(z)\xi_\omega}\\
&=-i\lim_{n\to\infty}\Omega_\omega(x,x_n\delta(z))\to0\\
\end{align*}
using Proposition \ref{prop1}. Therefore, by the cyclicity of $\xi_\omega$ and the density of $\Ao$, we obtain $\pi_\omega(w)=0$. We conclude by the faithfulness of $\pi_\omega$.
\end{proof}

It would be of interest to study the closure of the qu*-derivation in Proposition \ref{1}. Indeed, the existence of a representable and continuous functional makes that the Banach quasi *-algebra $(\A,\Ao)$ automatically *-semisimple. If $a\in\A$ is such that $\Omega(a,a)=0$ for every $\Omega\in\SSA$, then in particular
$$\SSA\ni\Omega^x_\omega(a,a):=\Omega_\omega(ax,ax)=0,\quad\forall x\in\Ao,\Omega\in\SSA.$$
This translates into $\pi_\omega(a)\lambda_\omega(x)=0$ for every $x\in\Ao$ and so $a=0$ again by faithfulness.

\bigskip
{\bf{Acknowledgement:} }
The author is grateful to the Organizers of the International Conference on Topological Algebras and Applications 2018 for taking care of this beautiful conference and the University of Tallinn (Estonia) for its hospitality.

\end{document}